\tikzstyle{V}=[fill=black,circle,scale=0.4, outer sep = 4pt]
\newtheorem{thm}{Theorem}[section]
\newtheorem{prop}[thm]{Proposition}
\theoremstyle{remark}
\newtheorem{rmk}[thm]{Remark}
\newtheorem{example}[thm]{Example}
\newtheorem{conj}[thm]{Conjecture}
\theoremstyle{definition}
\newtheorem{defn}[thm]{Definition}
\DeclareMathOperator{\Ad}{Ad}
\DeclareMathOperator{\Aut}{Aut}
\newcommand{\z}{^{(0)}}
\renewcommand{\2}{^{(2)}}
\newcommand{\inv}{^{-1}}
\newcommand{\bi}{\begin{itemize}}
\newcommand{\ei}{\end{itemize}}
\newcommand{\be}{\begin{enumerate}}
\newcommand{\ee}{\end{enumerate}}
\newcommand{\Q}{\mathbb{Q}}
\newcommand{\C}{\mathbb{C}}
\newcommand{\T}{\mathbb{T}}
\renewcommand{\H}{\mathcal{H}}
\newcommand{\G}{\mathcal{G}}
\newcommand{\K}{\mathcal{K}}
\newcommand{\W}{\mathcal{R}}
\newcommand{\R}{\mathbb{R}}
\newcommand{\N}{\mathbb{N}}
\newcommand{\Z}{\mathbb{Z}}
\begin{document}

\title{Twists over \'etale groupoids and twisted vector bundles}
\author{Carla Farsi and Elizabeth Gillaspy}
\date{\today}
\maketitle


\begin{abstract}
Inspired by recent papers on twisted $K$-theory, we consider in this article the question of when a twist $\mathcal{R}$ over a locally compact Hausdorff groupoid $\mathcal{G}$ (with unit space a CW-complex) admits a twisted vector bundle, and we relate this question to the Brauer group of $\mathcal{G}$.  
 We show that the twists which admit twisted vector bundles give rise to a subgroup of the Brauer group of $\mathcal{G}$.  When $\mathcal{G}$ is an \'etale groupoid, we establish conditions (involving the classifying space $B\mathcal{G}$ of $\mathcal{G}$) which imply that a torsion twist $\mathcal{R}$ over $\mathcal{G}$ admits  a twisted vector bundle.
\end{abstract}

\section{Introduction}

$C^*$-algebras associated to dynamical systems have provided motivation and examples for a wide array of topics in $C^*$-algebra theory: representation theory, ideal structure, 
$K$-theory, 
 classification, 
 and connections with mathematical physics, to name a few.
  In many of these cases, a complete understanding of the theory has required expanding the notion of a dynamical system to allow for partial actions 
  and twisted actions, 
   as well as actions of group-like objects such as semigroups or groupoids.

For example, the $C^*$-algebras $C^*(\G; \W)$ associated to a groupoid $\G$ and a twist $\W$ over $\G$ (hereafter referred to as \emph{twisted groupoid $C^*$-algebras}) provide important insights into mathematical physics as well as the structure of other $C^*$-algebras.  First, the collection of twists $\W$ over a groupoid $\G$ is intimately related with the cohomology of $\G$, cf.~\cite{equiv-sheaf-coh, brauer-gp-gpoids, tu-cohlogy}.
Another structural result is due to Kumjian \cite{c*-diagonals} and Renault \cite{renault-cartan}:  groupoid twists classify Cartan pairs. 
Finally, the papers \cite{TXLG, bouwk-mathai,
BCMMS} establish that twisted groupoid $C^*$-algebras  classify $D$-brane charges in many flavors of string theory.

We also note, following \cite{equiv-disint, TXLG}, that groupoid twists constitute an example of Fell bundles. Indeed, Fell bundles provide a universal framework for studying all of the generalized dynamical systems mentioned above.


In several recent papers (cf.~\cite{bema, TXLG, emerson-meyer}) on twisted groupoid $C^*$-algebras, the $K$-theory groups of these $C^*$-algebras have received a good deal of attention.
  Of particular interest is the question of when $K_0(C^*(\G; \W))$  can be completely understood in terms of $\G$-equivariant vector bundles.  Phillips established in Chapter 9 of \cite{phillips-equivar-bk} that $\G$-equivariant vector bundles may not suffice to describe $K_0(C^*(\G; \W))$, even when $\G  = M \rtimes G$ is a transformation group and $\W$ is trivial.  Vector bundles provide a highly desirable geometric perspective on $K_0(C^*(\G; \W))$, however, and so conditions are sought (cf.~\cite{adem-ruan, BCMMS, cantarero-equiv, dwyer, emerson-meyer, luck-oliver}) under which $K_0(C^*(\G; \W))$ is generated by $\G$-equivariant vector bundles.

%
%

  In Theorem 5.28 of \cite{TXLG},  Tu, Xu, and Laurent-Gengoux study this question for proper Lie groupoids $\G$.  They establish, in this context,  sufficient conditions for the $K$-theory group $K_0(C^*(\G, \W))$ associated to a twist $\W$ over $\G$ to be generated by $(\W, \G)$-twisted vector bundles over the unit space of $\G$ (see Definition \ref{def-twisted-vector-bundles} below).  
  A necessary condition is that $\W$ be a torsion element of the Brauer group of $\G$. Conjecture 5.7 on page 888 of \cite{TXLG} states that, if $\G$ is a proper Lie groupoid acting cocompactly on its unit space, then this condition is also sufficient. 

Conjecture 5.7 of \cite{TXLG} has not yet been disproved, but it has only been proven true  in certain special cases: cf.~\cite{luck-oliver, emerson-meyer, cantarero-equiv} 
when $\W = \G \times \T$ is the trivial twist,  
\cite{BCMMS} for nontrivial twists $\W$ over manifolds $M$, and \cite{adem-ruan, dwyer, lupercio-uribe} for 
 nontrivial twists over representable orbifolds $G \rtimes M$, where $G$ is a discrete group acting properly on a compact space $M$.

In hopes of shedding more light on this Conjecture, we present an equivalent formulation in Conjecture \ref{conj} below, using the Brauer group of $\G$ as defined in \cite{brauer-gp-gpoids}. 
Our reformulated conjecture relies on our result (Proposition \ref{prop:tw-tau-Brauer}) that, for any locally compact Hausdorff groupoid $\G$ whose unit space is a CW-complex, the collection of twists $\W$ over $\G$ which admit twisted vector bundles gives rise to a subgroup $Tw_\tau(\G)$ of the Brauer group $\text{Br}(\G)$.  

We note that  Theorem 3.2 of \cite{karoubi-twisted} also establishes a link between twisted vector bundles and the Brauer group, but Karoubi's approach in \cite{karoubi-twisted} differs substantially from ours, and does not address the group structure of $Tw_\tau(\G)$.

In the second part of the paper, we address the question of when a torsion twist $\W$ over an \'etale groupoid $\G$ admits a twisted vector bundle. The existence of such vector bundles is necessary (but not sufficient) in order for $K_0(C^*(\G;\W))$ to be generated by twisted vector bundles.  

Theorem \ref{main-thm} below establishes
  that if the classifying space $B\G$ is a compact CW-complex and if a certain principal $PU(n)$-bundle $P$ lifts to a $U(n)$-principal bundle $\tilde{P}$, then  up to Morita equivalence, the torsion twist $\W$ admits a twisted vector bundle.
To our knowledge, the connection between classifying spaces and twisted vector bundles has not been explored previously in the literature; we are optimistic that Theorem \ref{main-thm} will lead to new insights into the $K$-theory of twisted groupoid $C^*$-algebras.


\subsection{Structure of the paper}
We begin in Section \ref{sec:backgrounddefs} by reviewing the basic concepts we will rely on throughout this paper: locally compact Hausdorff groupoids, twists over such groupoids, groupoid vector bundles and twisted vector bundles.  In Section \ref{sec-Twisted-and-Brauer} we show that, for any locally compact Hausdorff groupoid $\G$ whose unit space is a CW-complex, the collection of twists over $\G$ which admit twisted vector bundles gives rise to a subgroup of the Brauer group of $\G$, and we use this to present an alternate formulation of Conjecture 5.7 from \cite{TXLG}. Finally in Section \ref{sec-twist-etale} we consider torsion twists for \'etale groupoids.  We establish, in Theorem \ref{main-thm}, sufficient conditions for a torsion twist $\W$ over an \'etale groupoid $\G$ to admit a twisted vector bundle, and we present examples showing that the hypotheses of Theorem \ref{main-thm} are satisfied in many cases of interest.

\subsection{Acknowledgments:} The authors are indebted to Alex Kumjian for pointing out a flaw in an earlier version of this paper. We would also like to thank 
Angelo Vistoli for helpful correspondence.

\section{Definitions}
\label{sec:backgrounddefs}
Recall that a \emph{groupoid} is a small category with inverses.  Throughout this note, $\G$ will denote (the space of arrows of) a groupoid with unit space $\G^{(0)} $, with  source, range (or target), and unit maps 
\[
s, r :\G \longrightarrow \G\z,\ 
u : \G\z \longrightarrow \G. 
\]
As usual we denote the set of  composable elements of $\G$   by $\G^{(2)}$, where
\[
\G^{(2)} =  \G \times_{s,\G\z,t} \G= \{ (g_1, g_2 )\in \G \times  \G \ | \ s(g_1)=r(g_2)  \}.
\]

In this paper we will primarily be concerned with \emph{locally compact Hausdorff groupoids}.  These are groupoids $\G$ such that the spaces $ \G\z, \G, \G\2$ have locally compact Hausdorff topologies with respect to which the maps $s, r: \G \to \G\z$, the multiplication $\G\2 \to \G$, and the inverse map $\G \to \G$ are continuous. 
Conjecture \ref{conj} below makes reference to \emph{Lie groupoids}, which are locally compact Hausdorff groupoids such that the spaces $ \G\z, \G, \G\2$ are smooth manifolds and all of the structure maps between them are smooth.

Theorem \ref{main-thm} deals with \emph{\'etale groupoids}, which are locally compact Hausdorff groupoids $\G$ for which $r,s$ are local homeomorphisms. For example, if a discrete group $\Gamma$ acts on a CW-complex $M$, the associated transformation group $\Gamma \ltimes M$ is an \'etale groupoid.

\begin{defn} 
\label{def-morph-of-lie-groupoids}
Let $\G_1 ,\G_2 $  be two  locally compact Hausdorff  groupoids with unit spaces $\G\z_1, \G\z_2$ respectively. A morphism 
$
f : \G_1  \rightarrow \G_2 $
consists of a pair of continuous maps $
f=(f_0, f_1 ),$ with
\[
f_0: \G\z_1 \to \G\z_2,\ f_1: \G_1 \to \G_2,
\]
such that, if we denote by $s_{\G_j}$ and  $r_{\G_j}$ the source and range maps of $\G_j$, $j=1,2$, we have
\[
s_{\G_2} \circ f_1\  =\ f_0 \circ s_{\G_1}, \ and \  r_{\G_2} \circ f_1\  =\ f_0 \circ r_{\G_1}.
\]
\end{defn}

The notion of a twist or $\T$-central extension of a groupoid $\G$ was originally developed
(cf.~\cite{c*-diagonals, cts-trace-gpoid-II, TXLG})  to provide a ``second cohomology group'' for groupoids.  Groupoid twists and their associated twisted vector bundles (see Definition \ref{def-twisted-vector-bundles} below) are the groupoid analogues of 
group 2-cocycles and projective representations.

\begin{defn} 
\label{def-twist-of-lie-groupoids}

Let $\G$ be a locally compact Hausdorff groupoid with unit space $\G\z$. A \emph{$\T$-central extension (or ``twist'')} of
$\G $  consists of
\begin{enumerate}
\item  A locally compact Hausdorff groupoid $\W $ with unit space $\G\z$, together with a morphism of locally compact Hausdorff  groupoids 
\[
(id, \pi) : \W\rightarrow \G
\]
which restricts to the identity on $\G\z$.
\item A left $\T$--action on $\W$, with respect to which $ \W $ is a left  principal  $\T$-bundle over $\G$. 
\item 
These two
structures are compatible in the sense that 
\[
(z_1  r_1)(z_2 r_2) = z_1z_2  (r_1 r_2), \forall\ z_1, z_2 \in \T,\ 
\forall (r_1, r_2) \in  \W^{(2)} = \W \times_{s,\G\z,r} \W.
\]
\end{enumerate}
We write $Tw(\G)$ for the set of twists over $\G$.
\end{defn}

These conditions (1)-(3) imply the exactness of the  sequence of groupoids
\[ \G\z \to \G\z \times \T \to \W \to \G \rightrightarrows  \G\z ,\]
which highlights the parallel between twists over a groupoid $\G$ and extensions of $\G$ by $\T$ (or elements of the second cohomology group $H^2(\G, \T)$).  

If $\W_1, \W_2 \in Tw(\G)$, we can form their Baer sum 
\[\W_1 + \W_2 := \{(r_1, r_2) \in \W_1 \times \W_2: \pi_1(r_1) = \pi_2(r_2)\}/\sim,\]
where $(r_1, r_2) \sim (z r_1, \overline{z} r_2)$ for all $z \in \T$.  
Define an action of $\T$ on $\W_1 + \W_2$ by $z \cdot [(r_1, r_2)] = [(z r_1, r_2)] = [(r_1, zr_2)]$, and observe that with this action, $\W_1 + \W_2$ becomes a twist over $\G$.

With this operation $Tw(\G)$ becomes a 
group; the identity element is the trivial extension $ \G \times \T$, and the inverse of a twist $\W$ is the twist $\overline{\W}$.  As groupoids, $\W = \overline{\W}$; however, the action of $\T$ on $\overline{\W}$ is the conjugate of the action on $\W$.  To be precise, if $r \in \W$, denote by $\overline{r}$ the corresponding element of $\overline{\W}$.  Then 
\[z \cdot \overline{r} = \overline{ \overline{z} \cdot r}.\]

In this note, we will consider actions of groupoids $\G$ and twists $\W$ over $\G$ on a variety of spaces.  We make this concept precise as follows.
\begin{defn} 
\label{def-G-spaces}
Let $\G $ be a locally compact Hausdorff groupoid with unit space $\G\z$. A \emph{$\G$-space} is a locally trivial fiber bundle $J: P \to \G\z
$
such that, 
setting 
\[\G * P = \{(g, p) \in \G \times P: s(g) = J(p)\}\] and equipping $\G * P$ with the subspace topology inherited from $\G \times P$, we have a continuous map 
$\sigma: \G * P \to  P$ satisfying
\begin{itemize}
\item $\sigma(J(p), p) = p$ for all $p \in P$;
\item $J(\sigma(g,p)) = r(g)$ for all $(g, p) \in \G * P$;
\item If $(g,h) \in \G\2$ and $(h, p) \in \G * P$, then $\sigma(g, \sigma(h,p)) = \sigma(gh, p)$.
\end{itemize}
We will often write $g \cdot p$ for $\sigma(g,p) \in P$.

Note that, as a consequence of the above definition, the map $\sigma_g: P_{s(g)} \to P_{r(g)}$ given by $p \mapsto \sigma(g, p)$ must be a homeomorphism, for all $g \in \G$.
\end{defn}

\begin{defn} 
\label{def-twisted-vector-bundles}
\begin{enumerate}
\item Let $\G $ be a locally compact Hausdorff groupoid with unit space $\G\z$, where $\G\z$ is a CW-complex. A \emph{$\G$--vector bundle} is a vector bundle $
J : E \to \G\z
$
which is a $\G$-space in the sense of Definition \ref{def-G-spaces}.


\item Let 
\[
\G\z \to \T \times \G\z \stackrel{i}{\longrightarrow}\W \stackrel{j}{\longrightarrow} \G \rightrightarrows  \G\z
\] 
be a $\T$-central extension of  locally compact Hausdorff groupoids. By
a\emph{ $(\G, \W)$--twisted vector bundle}, we mean a $\W$-vector bundle $J : E \to \G\z  $ such that, whenever $z\in  \T,\ r \in \W,e \in  E$  such that $ s(r) = J(e)$, we have 
\begin{equation}
\label{eq-compat}
(z\cdot r)  \cdot e = z (r \cdot e) .
\end{equation}
Here, the action on the right-hand side of the equation is simply scalar multiplication (identifying $\T$ with the unit circle of $\C$).
\item An equivalent characterization of $(\W, \G)$-twisted vector bundles is the following:

A
$\W$-vector bundle $E \to \G\z$ is a $(\W,\G)$-twisted vector bundle if and only if the subgroupoid $\ker j \cong \G\z \times \T$ of $\W$
acts on $E$ by scalar multiplication, where $\T$ is identified with the unit circle of $\C$.
\end{enumerate}
\end{defn}

In Proposition \ref{prop:tw-tau-Brauer}, we will establish a connection between the twists over $\G$ which admit twisted vector bundles and the Brauer group of $\G$, as introduced in \cite{brauer-gp-gpoids}.  Thus, we review here a few facts about the Brauer group and its connection to $Tw(\G)$.

\begin{defn}
\label{def:brauer-gp}
Let $\G $ be a locally compact Hausdorff groupoid.
As in Definition 8.1 of \cite{brauer-gp-gpoids}, we will denote by $Br_{0}(\G) $  the group of Morita equivalence classes of $\G$-spaces $\mathcal{A}$ such that $\mathcal{A} =  \G\z \times \K(\H)$ for some Hilbert space $\H$.  
We denote the class in $Br_0(\G)$ of $\mathcal{A}$ by $[\mathcal{A}, \alpha]$, where $\alpha$ is the action of $\G$ on $\mathcal{A}$.

Also, let  $\mathcal{E}(\G)$ be the quotient of $Tw(\G)$ by Morita equivalence or, equivalently,  the quotient by the subgroup  $W$ of elements which are Morita equivalent to the trivial twist. See Definition 3.1 and  Corollary 7.3 of \cite{brauer-gp-gpoids} for details. 
\end{defn}
Theorem 8.3 of \cite{brauer-gp-gpoids} establishes that
\[
Br_0(\G) \cong \mathcal{E}(\G)= Tw(\G)/W.
\]

\section{Twisted vector bundles and the Brauer group}
\label{sec-Twisted-and-Brauer}
Let $\G$ be a locally compact Hausdorff groupoid with unit space  a CW-complex. 
In this section, we will show that the subset $Tw_\tau(\G)$ of twists over $\G$ which admit twisted vector bundles gives a subgroup of $Br_0(\G)$.

\begin{defn}
For a locally compact Hausdorff groupoid $\G$, let $Br_{\tau}(\G)$ be the subgroup of $Br_0(\G)$ consisting of Morita equivalence classes $[\mathcal{A}, \alpha]$ of elementary $\G$-bundles  $\mathcal{A}  = \G\z \times \mathcal{ K(H)}$ with zero Dixmier-Douady invariant, such that $\H$ is finite dimensional.

When, in addition, the unit space of $\G$ is a CW-complex, we denote by $Tw_{\tau}(\G)$  the subset of $Tw(\G)$ consisting of twists $\W$ over $\G$ that admit a twisted vector bundle.
\end{defn}

\begin{prop}
Let $\G$ be a locally compact groupoid whose unit space is a CW-complex. 
Then $Tw_{\tau}(\G)$ is a subgroup of $Tw(\G)$. 
\end{prop}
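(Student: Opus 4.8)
The plan is to verify directly that $Tw_{\tau}(\G)$ contains the identity element of $Tw(\G)$ and is closed under the Baer sum and under inverses; since $Tw(\G)$ is already a group, these three facts suffice. For the identity, I would exhibit a $(\G \times \T, \G)$-twisted vector bundle over the trivial twist: take the trivial line bundle $E = \G\z \times \C$ and let $(g,z)$ act by $(g,z)\cdot(s(g),v) = (r(g), zv)$. A one-line check confirms that $\ker j = \G\z \times \T$ acts by scalar multiplication, so the compatibility condition \eqref{eq-compat} holds and $\G \times \T \in Tw_{\tau}(\G)$.

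For closure under the Baer sum, suppose $E_1 \to \G\z$ and $E_2 \to \G\z$ are $(\W_1, \G)$- and $(\W_2, \G)$-twisted vector bundles, respectively. The natural candidate for a twisted vector bundle over $\W_1 + \W_2$ is the fiberwise tensor product $E_1 \otimes E_2$, equipped with the action
\[ [(r_1, r_2)] \cdot (e_1 \otimes e_2) = (r_1 \cdot e_1) \otimes (r_2 \cdot e_2). \]
The crucial point is that this is well defined on the equivalence classes defining $\W_1 + \W_2$: replacing $(r_1, r_2)$ by $(z r_1, \overline{z} r_2)$ and applying \eqref{eq-compat} to each factor introduces a scalar $z \overline{z} = 1$, so the value is unchanged. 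One then checks that this defines a $\W_1 + \W_2$-vector bundle and that $\ker j \cong \G\z \times \T$, represented by classes $[(w, 1)]$ with $w \in \T$, acts by scalar multiplication, using part (3) of Definition \ref{def-twisted-vector-bundles} for each $E_i$. Hence $\W_1 + \W_2 \in Tw_{\tau}(\G)$.

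For inverses, recall that $\overline{\W}$ coincides with $\W$ as a groupoid but carries the conjugate $\T$-action $z \cdot \overline{r} = \overline{\overline{z}\cdot r}$. Given a $(\W, \G)$-twisted vector bundle $E$, I would take the complex-conjugate bundle $\overline{E}$ with action $\overline{r} \cdot \overline{e} = \overline{r \cdot e}$. A direct computation, again invoking \eqref{eq-compat} together with the conjugate scalar action on $\overline{E}$, shows that $(z \cdot \overline{r}) \cdot \overline{e} = \overline{\overline{z}(r\cdot e)} = z(\overline{r} \cdot \overline{e})$, so $\overline{E}$ is a $(\overline{\W}, \G)$-twisted vector bundle and $\overline{\W} \in Tw_{\tau}(\G)$.

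The conceptual content of each step is short, so I expect the main obstacle to be bookkeeping rather than ideas. Concretely, the remaining work is to verify that the tensor-product and conjugate actions are continuous and satisfy the axioms of Definition \ref{def-G-spaces} (the unit condition and associativity for the relevant $\W$-actions), and to confirm that $E_1 \otimes E_2$ and $\overline{E}$ are genuine locally trivial, finite-rank vector bundles over $\G\z$. These checks are routine but must be recorded carefully, since the entire point is that tensor product and conjugation preserve the twisted-vector-bundle structure while implementing, respectively, the Baer sum and the inverse in $Tw(\G)$.
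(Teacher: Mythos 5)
Your proof is correct and follows the same overall strategy as the paper's: verify the identity, closure under the Baer sum, and closure under inverses, using the trivial line bundle $\G\z\times\C$ for the identity and the conjugate bundle $\overline{E}$ with action $\overline{r}\cdot\overline{e}=\overline{r\cdot e}$ for inverses (these two steps are essentially verbatim what the paper does). The one place you diverge is the Baer-sum step: the paper forms
\[
E_1 * E_2 = \{(e_1,e_2)\in E_1\oplus E_2 : J_1(e_1)=J_2(e_2)\}/\sim,
\]
a quotient of the Whitney sum by the antidiagonal $\T$-action, with $[(r_1,r_2)]\cdot[(e_1,e_2)]=[(r_1\cdot e_1, r_2\cdot e_2)]$, whereas you use the fiberwise tensor product $E_1\otimes E_2$ with $[(r_1,r_2)]\cdot(e_1\otimes e_2)=(r_1\cdot e_1)\otimes(r_2\cdot e_2)$. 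Your well-definedness check ($z\overline{z}=1$) is exactly the point in either formulation, but the tensor product has the advantage that it is manifestly a locally trivial vector bundle of rank $(\operatorname{rank}E_1)(\operatorname{rank}E_2)$, while the paper's set-theoretic quotient of the fiber product is not obviously fiberwise linear as written and really has to be interpreted as (or replaced by) the tensor product anyway. So your version is, if anything, the cleaner and more standard realization of the same idea; the remaining routine verifications you flag (continuity, the axioms of Definition \ref{def-G-spaces}) are the same ones the paper leaves implicit.
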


\begin{proof} 
\begin{enumerate}
\item  (Closure under operation) 
Given two twists
\[
\G\z \to \G\z \times \T\stackrel{i_1}{\longrightarrow}\W_1
\stackrel{j_1}{\longrightarrow} \G \rightrightarrows  \G\z
, \qquad 
\G\z \to \G\z \times \T\stackrel{i_2}{\longrightarrow}\W_2
\stackrel{j_2}{\longrightarrow} \G \rightrightarrows  \G\z
\] 
that admit twisted vector bundles  $E_1$ and $E_2$  respectively, it is straightforward to show that
\[
E_1 * E_2 : = \{ (e_1, e_2)\in \ E_1 \oplus  E_2 |  \ J_1(e_1) = J_2(e_2)  \}/\sim,
\] 
is a twisted vector bundle for the Baer sum $\W_1 + \W_2$. 
The action of $\W_1 + \W_2$ on $E_1 * E_2$ is given by 
\[[(r_1, r_2)] \cdot [(e_1, e_2)] = [(r_1 \cdot  e_1, r_2 \cdot  e_2)].\]

\item {(Neutral Element)} The neutral element of $Tw(\G)$ is $\G \times \T $. 
Note that $\G \times \T$ admits a  twisted vector bundle $E$ -- namely, $E=\G\z \times \C$, with the action $(g, z) \cdot (s(g), v) = (r(g), zv)$.

\item {(Inverses)} 
We must show that, if $\W\in Tw_\tau(\G)$, then $\overline{\W} \in Tw_\tau(\G)$.

For $\W \in Tw_\tau(\G)$, let $E \to \G\z$ be a ($\W, \G)$-twisted vector bundle. Write $\overline{E}$ for the conjugate vector bundle -- that is, $\overline{E} = E$ as sets, and the additive operation on $\overline{E}$ agrees with that on $E$ (in symbols, $\overline{e + f} = \overline{e} + \overline{f}$), but the $\C$ action on $\overline{E}$ is the conjugate of the action on $E$: $z \cdot \overline{e} = \overline{ \overline{z} \cdot e}$. 
Define an action of $\overline{\W}$ on $\overline{E}$ by $\overline{r} \cdot \overline{e} = \overline{r \cdot e}$.  This action makes $\overline{E}$ into a $\overline{\W}$-vector bundle since $E$ is a $\W$-vector bundle.  
 Moreover, for any $z \in \T$ we have 
\begin{align*}
(z \cdot \overline{r}) \cdot \overline{e} &= \overline{\overline{z} \cdot r} \cdot \overline{e} = \overline{(\overline{z} r) \cdot e} \\
&= \overline{\overline{z}( r\cdot e)} = z  \overline{r \cdot e} \\
&= z (\overline{r} \cdot \overline{e}).
\end{align*}
Thus, $\T$ acts by scalars on $\overline{E}$, and so $\overline{E}$ is a $(\overline{W}, \G)$-twisted vector bundle.
\end{enumerate}

\end{proof}

\begin{rmk}
Recall from Proposition 5.5 of \cite{TXLG} that if a twist $\W$ over $\G$ admits a twisted vector bundle, then $\W$ must be torsion.  Thus, $Tw_\tau(\G)$ is a subgroup of $Tw^{tor}(\G)$, the torsion subgroup of $Tw(\G)$.
\end{rmk}

\subsection{The image of  $Tw_\tau(\G)$ in $Br_0(\G)$}
Recall that if $\G$ is a locally compact Hausdorff groupoid with unit space $\G\z$, then $Br_0(\G)$ consists of Morita equivalence classes of $\G$-spaces of the form $\mathcal{A} = \G\z \times \K(\H)$.

In Section 8 of \cite{brauer-gp-gpoids}, the authors construct an isomorphism $\Theta: Br_0(\G) \to Tw(\G)/W$, where $W$ is  the subgroup of $Tw(\G)$ consisting of elements which are Morita equivalent to the trivial twist. We will use this isomorphism to study the subgroup of $Br_0(\G)$ corresponding to $Tw_\tau(\G)$. 

Proposition 8.7 of \cite{brauer-gp-gpoids}  describes a homomorphism $\theta: Tw(\G) \to Br_0(\G)$ which induces the inverse of $\Theta$. 


\begin{prop}
\label{prop:tw-tau-Brauer}
Suppose $\G$ is a locally compact Hausdorff groupoid whose unit space $\G\z$ is a connected CW-complex, and suppose $\W \in Tw_\tau(\G)$.  Then there exists a finite-dimensional $\G$-vector bundle $V\to \G\z$ such that $\theta(\W)= [\Aut(V), \alpha]$, where $\alpha$ is induced by the action of $\G$ on $V$.

  Moreover, if $[\mathcal{A}, \alpha'] \in Br_0(\G)$ and $(\mathcal{A}, \alpha')$ is Morita equivalent to $(\mathcal{M}_n, \alpha)$ where $\mathcal{M}_n$ is an $M_n(\C)$-bundle over $\G\z$, 
then $[\mathcal{A}, \alpha'] = [\mathcal{M}_n, \alpha]$ lies in $ \theta(Tw_\tau(\G)).$

In other words, $Br_\tau(\G) \cong Tw_\tau(\G)$.
\end{prop}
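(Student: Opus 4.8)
The plan is to prove the two displayed claims separately and then observe that together they yield the isomorphism $Br_\tau(\G) \cong Tw_\tau(\G)$. For the first claim, I would start from a twist $\W \in Tw_\tau(\G)$ together with a $(\W,\G)$-twisted vector bundle $E \to \G\z$. Since $\G\z$ is a connected CW-complex, $E$ has a well-defined rank $n$. The key construction is to form $\Aut(V)$, the bundle of fiberwise linear automorphisms of $V := E$; this is naturally an $M_n(\C)$-bundle (indeed a bundle of finite-dimensional C*-algebras) over $\G\z$, on which $\G$ acts by conjugation via the $\W$-action on $E$. The crucial point to verify is that this conjugation action is genuinely a $\G$-action and not merely a $\W$-action: because $E$ is \emph{twisted}, the central $\T \cong \ker j$ acts by scalars, and scalars act trivially under conjugation on $\Aut(V)$, so the $\W$-action descends to a well-defined $\G$-action $\alpha$ on $\Aut(V)$. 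I would then need to match $[\Aut(V),\alpha]$ with $\theta(\W)$, using the explicit description of the homomorphism $\theta: Tw(\G) \to Br_0(\G)$ from Proposition 8.7 of \cite{brauer-gp-gpoids}; concretely, $\theta(\W)$ is represented by the associated bundle $\W \times_\T \K(\H)$ (or its finite-dimensional analogue), and the twisted vector bundle $E$ is exactly the data needed to trivialize the $\T$-ambiguity and identify this with $\Aut(V) \cong \G\z \times M_n(\C)$.

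For the second claim, I would take $[\mathcal{A}, \alpha'] \in Br_0(\G)$ Morita equivalent to an $M_n(\C)$-bundle $(\mathcal{M}_n, \alpha)$ over $\G\z$. Since Morita equivalent elements define the same class in $Br_0(\G)$, it suffices to show $[\mathcal{M}_n,\alpha] \in \theta(Tw_\tau(\G))$. The strategy is to reverse the construction above: from the $\G$-action on the finite-dimensional C*-algebra bundle $\mathcal{M}_n$, I would reconstruct a twist $\W$ over $\G$ and an equivariant vector bundle $E$ witnessing $\W \in Tw_\tau(\G)$. Concretely, since $\mathcal{M}_n$ is an $M_n(\C)$-bundle over a CW-complex with trivial Dixmier–Douady class, I can write $\mathcal{M}_n \cong \Aut(V)$ for some rank-$n$ vector bundle $V \to \G\z$; the $\G$-action $\alpha$ by conjugation lifts, up to the central $\T$, to an action of a twist $\W$ on $V$, making $V$ a $(\W,\G)$-twisted vector bundle, and unwinding $\theta$ gives $\theta(\W) = [\mathcal{M}_n,\alpha]$. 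Finally, combining both claims: the first shows $\theta(Tw_\tau(\G)) \subseteq Br_\tau(\G)$ (the image consists of finite-dimensional elementary bundles with zero Dixmier–Douady invariant), and the second shows the reverse containment, so $\theta$ restricts to a bijection $Tw_\tau(\G) \to Br_\tau(\G)$; since $\theta$ is already a group homomorphism inducing the isomorphism $\Theta^{-1}$, this restriction is a group isomorphism.

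The hard part will be making the lift from the conjugation $\G$-action on $\Aut(V)$ back to a twisted $\W$-action on $V$ fully rigorous — this is where the twist genuinely appears. The obstruction to lifting a $\G$-action on $\Aut(V) = PU(n)$-type data to an action on $V$ itself is precisely a class in $H^2(\G,\T)$, i.e.\ a twist, and I must check that the resulting $\W$-action satisfies the compatibility condition \eqref{eq-compat} so that $V$ is a bona fide twisted vector bundle rather than merely a projective representation. The vanishing of the Dixmier–Douady invariant and the finite-dimensionality of $\H$ are exactly the hypotheses that guarantee this lift exists and lands in $U(n)$ rather than requiring an infinite-dimensional cocycle; connectedness of $\G\z$ ensures the rank is constant so that $\Aut(V)$ is genuinely an $M_n(\C)$-bundle. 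I would therefore expect most of the effort to go into tracking the central $\T$-extension through the $\theta$ construction and verifying that Morita equivalence of the elementary bundles corresponds correctly to the Baer-sum group structure on twists.
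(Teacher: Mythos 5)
Your overall strategy matches the paper's, and the first half is essentially identical: take a $(\W,\G)$-twisted vector bundle $V$, let $\G$ act on $\Aut(V)$ by conjugation through the $\W$-action (well defined precisely because $\ker j\cong\G\z\times\T$ acts by scalars, which die under $\Ad$), and invoke Lemma 8.8 of \cite{brauer-gp-gpoids} to identify $[\Aut(V),\alpha]$ with $\theta(\W)$. In the second half you and the paper diverge in execution. The paper does not reconstruct the twist as a lifting obstruction and then check $\theta(\W)=[\mathcal{M}_n,\alpha]$; it simply takes $\W(\alpha):=\Theta([\mathcal{M}_n,\alpha])=\{(g,U)\in\G\times U_n(\C):\alpha_g=\Ad U\}$, so the identity $\theta(\W(\alpha))=[\mathcal{M}_n,\alpha]$ is automatic from Theorem 8.3 of \cite{brauer-gp-gpoids}, and the only thing left to prove is that $\W(\alpha)$ admits a twisted vector bundle. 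That bundle is produced not by first writing $\mathcal{M}_n\cong\Aut(V)$, as you propose, but as the associated bundle $V=\mathcal{GL}_n\times_{GL_n(\C)}\C^n$ of the bundle of invertible elements of $\mathcal{M}_n$, with action $(g,U)\cdot[A,v]=[\alpha_g(A),Uv]$; the twisted condition \eqref{eq-compat} is then a one-line computation. Your version is morally the same --- your ``$\T$-torsor of unitaries implementing $\alpha_g$'' is exactly $\W(\alpha)$ --- but the step you yourself flag as the hard part (constructing the lift, verifying continuity and local triviality of the resulting $\T$-bundle, checking \eqref{eq-compat}, and matching the result with $\theta$) is precisely what the paper discharges by defining $\W(\alpha)$ as a pullback and leaning on the already-established isomorphism $\Theta$. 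As written, your proof still owes that verification; adopting the pullback definition of the twist makes the debt disappear and collapses your argument onto the paper's.
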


\begin{proof}
If $\W \in Tw_\tau(\G)$ and $V$ is a ($\W, \G)$-twisted vector bundle, write $j: \W \to \G$ for the projection of $\W$ onto $\G$ and write $\sigma: \W * V \to V$ for the action of $\W$ on $V$.  Define $\alpha: \G * \text{Aut}(V) \to \text{Aut}(V)$ by 
\[(\alpha(g, A)(v) = \sigma(\eta,  A (\sigma(\eta^{-1}, v))),\]
where $v \in V_{r(g)}$ and $\eta \in j\inv(g)$.

Note that $\alpha(g, A)$ does not depend on our choice of $\eta\in j^{-1}(g)$: If $\eta, \eta' \in j^{-1}(g)$, the fact that $\W$ is a principal $\T$-bundle over $\G$ implies that $\eta = z \eta'$ for some $z \in \T$.  Since $V$ is a $(\G, \W)$-twisted vector bundle, $\sigma(\eta, v) = z \sigma(\eta', v)$, and consequently 
\[ \sigma(\eta,  A(\sigma(\eta^{-1}, v))) = \sigma(\eta', A(\sigma((\eta')^{-1}, v))).\]

Now, Lemma 8.8 of \cite{brauer-gp-gpoids} establishes that $[\text{Aut}(V), \alpha] = \theta(\W)$. 

For the second statement, suppose $\alpha$ is an action of $\G$ on a bundle $\mathcal{M}_n$ of $n$-dimensional matrix algebras over $\G\z$.   Then Theorem 8.3 of \cite{brauer-gp-gpoids} explains how to construct the twist $\Theta([\mathcal{M}_n, \alpha])$, using a pullback construction.  To be precise,
\[ \Theta([\mathcal{M}_n, \alpha]) = \{(g, U) \in \G \times U_n(\C): \alpha_g = \Ad U\} =: \W(\alpha).\]
We will construct a $(\G, \W(\alpha))$-twisted vector bundle, proving that $[\mathcal{M}_n, \alpha] \in \theta(Tw_\tau(\G))$.

The $\T$-action on $\W(\alpha)$ which makes it into a twist over $\G$ is given by \[z \cdot (g, U) = (g, z \cdot U).\]

Consider the sub-bundle $\mathcal{GL}_n$ of $\mathcal{M}_n$ obtained by considering only the invertible elements of $M_n(\C)$ in each fiber of $\mathcal{M}_n$.  Notice that $GL_n(\C)$ acts on $\mathcal{GL}_n$ by right multiplication in each fiber, and that this action is continuous, and free and transitive in each fiber, and hence makes $\mathcal{GL}_n$ into a principal ${GL}_n$ bundle.
 We consequently obtain an associated vector bundle over $\G\z$, 
\[ V = \mathcal{GL}_n \times_{GL_n(\C)} \C^n.\]
Moreover,  $\W(\alpha)$  acts on $V$:
\[(g, U) \cdot [A, v] = [\alpha_g(A), Uv].\]
To see that this action is well defined, take $G \in GL_n(\C)$ and calculate:
\begin{align*}
[\alpha_g(AG), U(G\inv v)] &= [U AGU\inv, UG\inv v] = [UA, v]= [UAU\inv, Uv]\\
[\alpha_g(A), v] &= [UAU\inv, Uv].
\end{align*}
Moreover, 
\begin{align*}
(z \cdot (g, U)) \cdot [A,v] &= [\alpha_g(A), z U(v)]  \\
& = z \cdot [\alpha_g(A), Uv] = z \cdot \left( (g, U) \cdot [A,v]\right),
\end{align*}
so $V$ is an $(\W(\alpha), \G)$-twisted vector bundle.  Thus, $\W(\alpha) \in Tw_\tau(\G)$ whenever $[\alpha, \mathcal{M}] \in Br_0(\G)$.
\end{proof}

Proposition \ref{prop:tw-tau-Brauer} thus establishes that twists $\W$ over $\G$ which admit twisted vector bundles correspond  to $C^*$-bundles over $\G\z$ with finite-dimensional fibers.  Phrased in this way, the parallel between Proposition \ref{prop:tw-tau-Brauer} and Theorem 3.2 of \cite{karoubi-twisted} becomes evident.  However, the two proofs take very different approaches.  Moreover, Karoubi does not address the group structure of $Tw_\tau(\G)$ in Theorem 3.2 of \cite{karoubi-twisted}.

Proposition \ref{prop:tw-tau-Brauer} also
allows us to rephrase  Conjecture 5.7 of \cite{TXLG} in terms of the Brauer group, as follows.  Recall that, in its original form, Conjecture 5.7 of \cite{TXLG} asserts that all torsion elements of $Tw(\G)$ should admit twisted vector bundles, if $\G$ is proper and the quotient $\G\z/\G$ is compact.

\begin{conj}[\cite{TXLG} Conjecture 5.7]
\label{conj}
Let $\G$ be a proper Lie groupoid such that the quotient $\G\z /\G$ is compact, and 
let $[\mathcal{A}, \alpha] \in \text{Br}_0(\G)$ be a torsion element.  Then $[\mathcal{A}, \alpha] =[\mathcal{M}, \alpha']$ for some finite-dimensional matrix algebra bundle $\mathcal{M}$ over $\G\z$ and an action $\alpha'$ of $\G$ on $\mathcal{M}$.
\end{conj}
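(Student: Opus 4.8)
The plan is to combine Proposition~\ref{prop:tw-tau-Brauer} with an equivariant version of Serre's theorem on the topological Brauer group. By Proposition~\ref{prop:tw-tau-Brauer} the desired conclusion $[\mathcal{A},\alpha]=[\mathcal{M},\alpha']$, with $\mathcal{M}$ a finite-dimensional matrix algebra bundle, is exactly the assertion that the torsion class $[\mathcal{A},\alpha]$ lies in $Br_\tau(\G)$. Transporting this across the isomorphism $\Theta\colon Br_0(\G)\to Tw(\G)/W$, I would first reduce the problem to the following statement: given a twist $\W$ of finite order $n$ over a proper Lie groupoid $\G$ with $\G\z/\G$ compact, there is, up to Morita equivalence, a $(\W,\G)$-twisted vector bundle. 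Thus the whole conjecture becomes the claim that the torsion subgroup of $Tw(\G)$ coincides with $Tw_\tau(\G)$.

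Second, I would translate the order condition into the language of principal bundles, exactly as in the setup of Theorem~\ref{main-thm}. A torsion twist of order dividing $n$ is represented by a $\G$-equivariant principal $PU(n)$-bundle $P$ over $\G\z$, and a $(\W,\G)$-twisted vector bundle of rank $N$ is precisely a $\G$-equivariant lift of (a stabilization of) $P$ through the central extension $\T\to U(N)\to PU(N)$. So the entire question is an equivariant lifting problem: does $P$ admit a $\G$-equivariant lift $\tilde P$ to a principal $U(N)$-bundle for some multiple $N$ of $n$?

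Third, I would pass to the classifying space $B\G$, where this lifting obstruction becomes a cohomology class. The bundle $P$ is classified by a map $B\G\to BPU(n)$, and the obstruction to lifting it to $U(N)$ lives in $H^3(B\G,\Z)$, where it agrees with the image of the Dixmier--Douady invariant and is annihilated by $n$. This is the point at which properness and cocompactness must be used: they should force $B\G$ to carry the finiteness needed (for instance a compact CW model, or at least finitely generated cohomology in the relevant degrees) to invoke the classical theorem of Serre that a torsion class in $H^3$ of such a space is realized by a finite-dimensional Azumaya algebra --- equivalently, that after stabilizing the structure group to $PU(N)$ the obstruction can be killed. This would produce a finite-dimensional matrix bundle over $B\G$ whose class matches that of $\W$.

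The main obstacle is the final descent step: converting the topological, homotopy-theoretic lift $\tilde P$ over $B\G$ back into a genuine, strict $\G$-equivariant $U(N)$-bundle over $\G\z$. This is exactly the hypothesis that Theorem~\ref{main-thm} must assume rather than prove --- namely that the $PU(n)$-bundle $P$ actually lifts to a $U(N)$-principal bundle $\tilde P$ --- and it is the crux of why Conjecture~\ref{conj} has so far resisted a general proof. Two further difficulties feed into it. In the non-\'etale Lie setting one must use the Haar system together with the linearization theorem for proper Lie groupoids to reduce, near each orbit, to the action of a compact isotropy group on a normal slice, where averaging makes invariant metrics and finite-dimensional lifts available. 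One must then glue these local finite-dimensional models across the various orbit types of $\G\z/\G$ while keeping the rank $N$ uniformly bounded --- a patching problem with no counterpart in the non-equivariant Serre theorem, and the place where I would expect the argument to break down without additional input on $B\G$ of the kind supplied by the hypotheses of Theorem~\ref{main-thm}.
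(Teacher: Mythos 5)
The statement you were given is a conjecture --- the paper's restatement of Conjecture 5.7 of \cite{TXLG} --- and the paper offers no proof of it; the introduction explicitly says it ``has only been proven true in certain special cases.'' So there is no proof in the paper to compare your proposal against. What the paper does do is exactly your first step: Proposition \ref{prop:tw-tau-Brauer} is used to translate the conjecture into the assertion that, modulo the subgroup $W$ of twists Morita equivalent to the trivial one, every torsion twist admits a twisted vector bundle, i.e.\ that the torsion subgroup of $Tw(\G)$ coincides with $Tw_\tau(\G)$ up to Morita equivalence. Your first paragraph reproduces this reformulation faithfully (in the reverse direction from the paper's presentation), and that part is correct.

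The remainder of your proposal is a program, not a proof, and you candidly identify the fatal gap yourself: the existence of a $\G$-equivariant lift of the $PU(n)$-bundle $P$ to a $U(N)$-bundle is precisely the \emph{hypothesis} of Theorem \ref{main-thm}, not something the theorem delivers. Two further mismatches prevent even a conditional appeal to Theorem \ref{main-thm} here: that theorem is stated only for \'etale groupoids with $B\G$ (homotopy equivalent to) a compact CW complex, whereas the conjecture concerns proper Lie groupoids --- which are generally not \'etale, and for which $B\G$ is typically not compact (already $BG$ for a nontrivial compact Lie group $G$ is infinite-dimensional, so compactness of $\G\z/\G$ does not give compactness of $B\G$). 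Moreover, as the paper notes in the proof of Theorem \ref{main-thm}, the pullback $p_1: H^1(B\G, PU(n)) \to H^1(\G_\bullet, PU(n))$ need not be an isomorphism since $PU(n)$ is nonabelian, so the passage to $B\G$ genuinely loses equivariant information and cannot by itself resolve the lifting problem. You should present your text as a reformulation (which matches the paper) together with an outline of obstacles, and not as a proof of the statement.
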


\section{Twisted vector bundles for \'etale groupoids}
\label{sec-twist-etale}

 In this section we consider torsion twists over \'etale groupoids $\G$. 
We establish in Theorem \ref{main-thm} sufficient conditions for a torsion twist $\W$ over $\G$ to admit (up to Morita equivalence) a twisted vector bundle, and we describe examples meeting these conditions in Section \ref{sec:examples}.  The conditions of Theorem \ref{main-thm} are phrased in terms of the classifying space $B\G$ and in terms of a principal bundle $P$  associated 
 to $\W$.  Using $B\G$ to study twisted vector bundles appears to be a new approach; this perspective was inspired by Moerdijk's result in \cite{moerdijk-haefliger} identifying $H^*(\G, \mathscr{S})$ and $H^*(B\G, \tilde{\mathscr{S}})$ for an abelian $\G$-sheaf $\mathscr{S}$, and the Serre-Grothendieck Theorem (cf. Theorem 1.6 of \cite{grothendieck}) relating $H^1(M, PU(n))$ and $H^2(M, \T)$ for $M$ a CW-complex.

 
%
%
%
%
 We begin with some preliminary definitions and results.


%
%
 \begin{defn}
 Let $\G$ be a topological groupoid.  The \emph{simplicial space associated to $\G$} is 
 \[\G_\bullet  = \{\G^{(k)}, \epsilon^k_j, \eta^j_k\}_{0 \leq j\leq k\in \N},\]
  where $ \G^{(k)}$ is the space of composable $n$-tuples in $\G$, $\epsilon^k_j: \G^{(k)} \to \G^{(k-1)}$, and $\eta^j_k: \G^{(k)} \to \G^{(k+1)}$ are given as follows:
\begin{align*}
\epsilon^k_0(g_1, \ldots, g_k) &= (g_2, \ldots, g_k) \\
\epsilon^k_i(g_1, \ldots, g_k) &= (g_1, \ldots, g_{i} g_{i+1}, \ldots, g_k) \text{ if } 1 \leq i \leq k-1 \\
\epsilon^k_k (g_1, \ldots, g_k) &= (g_1, \ldots, g_{k-1}) 
\end{align*}
If $k=1$, we have $\epsilon^1_0(g) = s(g),\ \epsilon^1_1(g) = r(g)$. 
 
 The degeneracy maps $\eta^k_i$ are given for $k \geq 1$ by 
 \begin{align*}
 \eta^k_i(g_1, \ldots, g_k) &= (g_1, \ldots, g_i, s(g_i), g_{i+1}, \ldots, g_k) \text{ if } i \geq 1;\\
  \eta^k_0(g_1, \ldots, g_k) &= (r(g_1), g_1, \ldots, g_k).\end{align*}
 When $k=0$, the map $\eta^0_0: \G\z \to \G^{(1)}$ is just the standard inclusion of $\G\z$ into $\G^{(1)} =\G$.
 \end{defn}
For the definition of a general simplicial space, see e.g. \cite{gen-tu-xu} Section 2.1.

\begin{defn}[cf.~\cite{moerdijk-haefliger, willerton}]
Let $\G$ be a topological groupoid.  A \emph{classifying space} $B\G$ for $\G$ is any space which can be realized as a quotient $B\G = E \G /\G$ of a weakly contractible space $E\G$ by a free action of $\G$.  When we need an explicit model for $B\G$, we will use the geometric realization $|\G_\bullet|$ of the simplicial space associated to $\G$:
\[B\G= |\G_\bullet| = \left( \bigsqcup_{k\geq 0} \G^{(k)} \times \Delta^k \right)/\sim,\]
where $\Delta^k$ denotes the standard $k$-simplex.\footnote{For $k > 0$, $\Delta^k$ can be realized as a subset of $\R^k$, namely, 
\[\Delta^k = \{(t_1, \ldots, t_k): 0 \leq t_1 \leq t_2 \leq \cdots \leq t_k \leq 1\}.\]
If $k =0$, $\Delta^k$ consists of one point, and we will denote $\Delta^0 = \emptyset$.
}
The equivalence relation $\sim$ is defined by $(p, \delta^{k-1}_i v) \sim(\epsilon^k_i p, v)$ for $p \in \G^{(k)}, v \in \Delta^{k-1}$, where $\delta^{k-1}_i: \Delta^{k-1} \to \Delta^k$ is the $i$th degeneracy map, gluing $\Delta^{k-1}$ to the $i$th face of $\Delta^k$, and $\epsilon^k_i: \G^{(k)} \to \G^{(k-1)}$ is the $i$th face map. 
In other words, we have $\delta^0_0(\emptyset) = 0, \delta^0_1(\emptyset) = 1$, and if $k > 1$ 
\[\delta^{k-1}_i(t_1, \ldots, t_{k-1}) = \left\{ \begin{array}{cl}
(0, t_1, \ldots, t_{k-1}) & \text{ if } i =0 \\
(t_1, \ldots, t_i, t_i, t_{i+1}, \ldots, t_k) & \text{ if } 1 \leq i \leq k-1 \\
(t_1, \ldots, t_{k-1}, 1) & \text{ if } i = k.
\end{array}\right.\]

The topology on this model of $B\G$ is the inductive limit 
 topology induced by the natural topologies on $\G^{(n)}, \Delta^n$.  
\end{defn} 


\begin{defn}[\cite{gen-tu-xu} Definition 2.2]
\label{def:principal-bdl-simplicial}
Let $X_\bullet$ be a simplicial space and let $G$ be a topological group.  A \emph{principal $G$-bundle over $X_\bullet$} is a simplicial space $P_\bullet$ such that, for each $k \geq 0$, $P_k$ is a principal $G$-bundle over $X_k$, and the face and degeneracy maps in $P_\bullet$ are morphisms of principal bundles. 
\end{defn}

\begin{rmk}
\label{rmk:gen-morph}
Combining \cite{TXLG} Definition 2.1 and Proposition 2.4 of \cite{gen-tu-xu}, we see that principal $G$-bundles over $\G_\bullet$ are equivalent to generalized morphisms $\G\to G$.
 \end{rmk}
 
 \begin{prop}
 \label{prop:PUn-bdl}
 Let $\G$ be an \'etale groupoid.  Suppose that the classifying space $B\G$ is (homotopy equivalent to) a compact CW complex.   If $\W \to \G$ is a twist of order $n$, then $\W$ gives rise to a principal $PU(n)$-bundle $P \to \G\z$.  Moreover, $P$ admits a left action of $\G$ which commutes with the right action of $PU(n)$.
 \end{prop}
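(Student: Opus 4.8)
The plan is to pass from the groupoid to its classifying space, invoke the Serre--Grothendieck theorem there, and then transport the resulting bundle back to a $\G$-equivariant bundle over $\G\z$. First I would record that the twist $\W$ determines a class $[\W]$ of order $n$ in the groupoid cohomology group $H^2(\G, \underline{\T})$, where $\underline{\T}$ denotes the (trivially $\G$-equivariant) sheaf of germs of continuous $\T$-valued functions. Moerdijk's identification $H^*(\G, \mathscr{S}) \cong H^*(B\G, \tilde{\mathscr{S}})$ from \cite{moerdijk-haefliger}, applied to $\mathscr{S} = \underline{\T}$, is a natural isomorphism of abelian groups and so carries $[\W]$ to a class $\alpha$ of order $n$ in $H^2(B\G, \underline{\T})$. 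Since $B\G$ is (homotopy equivalent to) a compact CW complex, the sheaf $\underline{\R}$ is fine, so the exponential sequence $0 \to \Z \to \underline{\R} \to \underline{\T} \to 0$ gives $H^2(B\G, \underline{\T}) \cong H^3(B\G, \Z)$; thus $\alpha$ becomes a torsion class of order $n$ in the integral cohomology of a compact CW complex.

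Next I would apply the Serre--Grothendieck theorem (Theorem 1.6 of \cite{grothendieck}), which for a CW complex relates $H^1(-, PU(n))$ and $H^2(-, \T)$ through the connecting map of the central extension $1 \to \T \to U(n) \to PU(n) \to 1$. Because $\alpha$ has order exactly $n$ and $B\G$ is a compact CW complex, this produces a principal $PU(n)$-bundle $Q \to B\G$ whose associated class in $H^2(B\G, \underline{\T})$ is $\alpha$. It is precisely the finiteness of $B\G$ that is needed here, in order to realize an order-$n$ class by a $PU(n)$-bundle rather than merely by a $PU(m)$-bundle for some multiple $m$ of $n$.

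Finally I would transport $Q$ from $B\G = |\G_\bullet|$ to a principal $PU(n)$-bundle $P_\bullet$ over the simplicial space $\G_\bullet$ in the sense of Definition \ref{def:principal-bdl-simplicial}, and set $P := P_0 \to \G\z$. Concretely, restricting $Q$ along the characteristic maps $\G^{(k)} \times \Delta^k \to B\G$ and using contractibility of each $\Delta^k$ to trivialize in the simplex direction yields principal $PU(n)$-bundles $P_k \to \G^{(k)}$, while the gluing relation defining $|\G_\bullet|$ forces the face and degeneracy maps of $\G_\bullet$ to lift to bundle morphisms, so that $\{P_k\}$ is a simplicial principal bundle. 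By Remark \ref{rmk:gen-morph}, such a bundle is exactly a generalized morphism $\G \to PU(n)$; in particular the face maps $s, r : \G \to \G\z$ lift to isomorphisms $s^* P \cong P_1 \cong r^* P$ of principal $PU(n)$-bundles, and the compatibility over $\G\2$ makes this a left action of $\G$ on $P$ by principal-bundle automorphisms, i.e. an action commuting with the right $PU(n)$-action. This is the asserted bundle.

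I expect the main obstacle to be this last transport step: extracting a genuine simplicial (equivalently, $\G$-equivariant) bundle over $\G_\bullet$ from a bundle on the geometric realization requires choosing the simplex-direction trivializations compatibly across all degrees and checking that they respect the face maps, the degeneracy maps, and the inductive-limit topology on $B\G$. The cleanest way to discharge it is to observe that both principal $PU(n)$-bundles over $B\G$ and generalized morphisms $\G \to PU(n)$ are classified by homotopy classes of maps $B\G \to BPU(n)$ (using \cite{gen-tu-xu} together with the standard classification of principal bundles over a compact CW complex), so that the classifying map of $Q$ directly determines the generalized morphism, hence $P$; verifying that these two classifications agree under geometric realization is the crux of the argument.
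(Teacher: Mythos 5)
Your proposal is correct and follows essentially the same route as the paper: pass from $\W$ to a class in $H^2(\G,\mathcal{S}^1)$, transport it to $H^2(B\G,\mathcal{S}^1)\cong H^3(B\G,\Z)$ via Moerdijk's theorem, apply Serre--Grothendieck to obtain a $PU(n)$-bundle $Q$ over $B\G$, and pull back along the canonical maps $\G^{(k)}\to B\G$ to get a simplicial principal bundle, hence a generalized morphism $\G\to PU(n)$ and the $\G$-action on $P$. The compatibility issue you flag at the end is resolved in the paper exactly as you suggest, by using the vertex maps $\varphi_k(g_1,\ldots,g_k)=[(g_1,\ldots,g_k),(0,\ldots,0)]$, which commute with the face and degeneracy maps on the nose.
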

 \begin{proof}
  For any \'etale groupoid $\G$, and any twist $\W \to \G$, Proposition 11.3, Corollary 7.3, and Theorem 8.3 of \cite{brauer-gp-gpoids} combine to tell us that $\W$  determines an element of $H^2(\G, \mathcal{S}^1)$, where $\mathcal{S}^1$ denotes the sheaf of circle-valued functions on $\G\z$. The main Theorem of \cite{moerdijk-haefliger} tells us that we then obtain an associated element $[\W ]$ of $H^2(B\G, \mathcal{S}^1) \cong H^3(B\G, \Z)$.  All of the maps $\text{Tw}(\G) \to H^2(\G, \mathcal{S}^1) \cong H^3(B\G, \Z)$ are group homomorphisms, so  if $\W$ is a torsion twist of order $n$, then $n \cdot [\W]= 0$ also in $H^3(B\G, \Z)$. 

Now, suppose that $B\G$ is a compact CW complex and that $\W$ is a torsion twist of order $n$.  The Serre-Grothendieck theorem (cf. \cite{grothendieck} Theorem 1.6, \cite{donovan-karoubi} Theorem 8 or \cite{lupercio-uribe} Theorem 7.2.11)
 tells us that $\W$  gives rise to a principal $PU(n)$ bundle $Q$ over $B\G$. 

Note that, for  each $k \in \N$, the map $\varphi_k: \G^{(k)} \to B\G$ given by $(g_1, \ldots, g_k) \mapsto [(g_1, \ldots, g_k), (0, \ldots, 0)]$ is continuous.  Moreover, the equivalence relation which defines $B\G$ ensures that the maps $\varphi_k$ commute with the face and degeneracy maps $\epsilon^k_i, \eta^k_i$: 
\[ \forall \ i, \ \varphi_k \circ \eta^{k-1}_i = \varphi_{k-1} \text{ and } \varphi_{k-1} \circ \epsilon^k_i = \varphi_k.\]
Principal $PU(n)$-bundles over a space $X$ are classified by homotopy classes of maps $X \to BPU(n)$, so the maps $\varphi_k$ allow us to pull back our principal $PU(n)$-bundle $Q$ over $B\G$ to a principal $PU(n)$-bundle $P_k$ over $\G^{(k)}$ for each $k \geq 0$. 
Since the maps $\varphi_k$ commute with the face and degeneracy maps for $\G_\bullet$, the maps $\eta^k_i, \epsilon^k_i$ induce morphisms of principal bundles which make $P_\bullet$ into a principal $PU(n)$-bundle over $\G_\bullet$ in the sense of Definition \ref{def:principal-bdl-simplicial}.  Thus, by Proposition 2.4 of \cite{gen-tu-xu}, we have a principal $PU(n)$-bundle $P$ over $\G\z$ which admits an action of $\G$.
 \end{proof}
 
 In what follows, we will combine the bundle $P_\bullet$ constructed above with the canonical $\T$-central extension 
 \begin{equation}
 \beta : \quad 1 \to \T \to U(n) \to PU(n) \xrightarrow{\pi} 1\label{beta}
 \end{equation}
 of $PU(n)$.  
The Leray spectral sequence for the map $BU(n) \to PU(n)$  implies that $\beta$ is a generator of $H^2(PU(n), \T) \cong \Z_n$.  When $n$ is prime, an alternate proof of this fact is given in Theorem 3.6 of \cite{vistoli}.

 These preliminaries completed, we now present the main result of this section.
 \begin{thm}
 \label{main-thm}
 Let $\G$ be an \'etale groupoid.  Suppose that  the classifying space $B\G$ is (homotopy equivalent to) a compact  CW complex. 
 Let $\W \to \G$ be a twist of order $n$ over $\G$ such that the associated $PU(n)$-bundle $P$ of Proposition \ref{prop:PUn-bdl} lifts to a $U(n)$-bundle $\tilde{P}$ over $\G\z$. 
  Then there is a twist $\mathcal{T}$ such that $[\mathcal{T}] = [\W] \in H^2(\G, \mathcal{S}^1)$ and such that $\mathcal{T}$ admits a twisted vector bundle.
 \end{thm}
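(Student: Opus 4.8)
The plan is to exploit the central extension $\beta$ of \eqref{beta} in two complementary ways: the lift $\tilde P$ lets me manufacture an honest finite-dimensional vector bundle, while the failure of $\tilde P$ to be $\G$-equivariant produces the very twist $\mathcal{T}$ that this bundle twists. First I would set
\[ V := \tilde P \times_{U(n)} \C^n \longrightarrow \G\z, \]
the rank-$n$ vector bundle associated to the $U(n)$-bundle $\tilde P$; this is the candidate twisted vector bundle. Writing $\sigma_g \colon P_{s(g)} \to P_{r(g)}$ for the action of $g \in \G$ on $P$ furnished by Proposition \ref{prop:PUn-bdl}, I would then define
\[ \mathcal{T} := \{ (g, \phi) : g \in \G, \ \phi \colon \tilde P_{s(g)} \to \tilde P_{r(g)} \text{ a } U(n)\text{-equivariant homeomorphism lifting } \sigma_g \}, \]
where ``lifting $\sigma_g$'' means $\phi$ descends to $\sigma_g$ under $\tilde P \to \tilde P/\T = P$. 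Because any two lifts of $\sigma_g$ differ by the central $\T \subset U(n)$, each fibre of $\mathcal{T} \to \G$ is a $\T$-torsor; composition of lifts supplies the groupoid multiplication, and the scaling $z \cdot (g, \phi) := (g, (zI)\phi)$ realizes $\mathcal{T}$ as a $\T$-central extension of $\G$. Here the étale hypothesis is what allows the lifts to be chosen continuously over local bisections, so that $\mathcal{T}$ is a locally compact Hausdorff twist.

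It then remains to check that $V$ is genuinely a $(\mathcal{T}, \G)$-twisted vector bundle and to identify the class of $\mathcal{T}$. For the first, I would let $(g, \phi)$ act by $(g, \phi) \cdot [p, v] := [\phi(p), v]$ for $p \in \tilde P_{s(g)}$ and $v \in \C^n$; equivariance of $\phi$ makes this well defined on $V = \tilde P \times_{U(n)} \C^n$, and since the central $\T \subset U(n)$ acts on $\C^n$ by scalars, the scaled lift $(zI)\phi$ yields exactly $z[\phi(p), v]$, which is the compatibility \eqref{eq-compat}. This verification is routine and shows at once that $\mathcal{T} \in Tw_\tau(\G)$.

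The substantive step — and the one I expect to be the main obstacle — is proving $[\mathcal{T}] = [\W]$ in $H^2(\G, \mathcal{S}^1)$. Conceptually, $\mathcal{T}$ is the image of the class of $P$ under the connecting homomorphism $\delta \colon H^1(\G, PU(n)) \to H^2(\G, \mathcal{S}^1)$ attached to $\beta$, since $\mathcal{T}$ measures precisely the obstruction to lifting the $\G$-action on $P$ to one on $\tilde P$. On the other hand, $P$ arose in Proposition \ref{prop:PUn-bdl} by pulling back, along the simplicial maps $\varphi_k \colon \G^{(k)} \to B\G$, a $PU(n)$-bundle $Q$ over $B\G$ representing $[\W]$ under the Serre-Grothendieck correspondence, so that $\delta$ applied to $Q$ returns $[\W]$ over $B\G$. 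I would therefore trace $[\W]$ through the square relating the connecting maps for $\beta$ over $B\G$ and over $\G_\bullet$, invoking Moerdijk's isomorphism $H^2(B\G, \mathcal{S}^1) \cong H^2(\G, \mathcal{S}^1)$ and naturality of $\delta$ under $\varphi_\bullet^*$, to conclude $[\mathcal{T}] = \delta[P] = [\W]$. A cleaner alternative is to bypass the connecting map and appeal to Proposition \ref{prop:tw-tau-Brauer}: the matrix-algebra bundle $\Aut(V) \cong P \times_{PU(n)} M_n(\C)$ has Dixmier-Douady class $[\W]$ by construction of $P$, and $\theta(\mathcal{T}) = [\Aut(V), \alpha]$ then forces $[\mathcal{T}] = [\W]$. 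Either route, the delicacy lies in keeping the several cohomological models — groupoid cohomology, the cohomology of $B\G$, and nonabelian $H^1$ together with its connecting map — mutually aligned, and in verifying that $\varphi_\bullet^*$ respects all of them.
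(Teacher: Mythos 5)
Your proposal is correct and follows essentially the same route as the paper: your groupoid of $U(n)$-equivariant lifts of $\sigma_g$ is exactly the paper's pullback $\mathcal{T} \subset \frac{\tilde{P}\times\tilde{P}}{U(n)}\times\G$ of the gauge groupoid along $\varphi$, your associated bundle $\tilde{P}\times_{U(n)}\C^n$ with the action $[p,v]\mapsto[\phi(p),v]$ is what the paper obtains by feeding the $\T$-equivariant generalized morphism $\mathcal{T}\to U(n)$ into Proposition 5.5 of \cite{TXLG}, and your connecting-map/naturality argument for $[\mathcal{T}]=[\W]$ is the paper's computation $\Phi(P_\bullet,\beta)=[\W]$ via $P = p_1\circ v\inv\circ(i^*_2)\inv(\W)$ and Moerdijk's isomorphism. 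The only difference is that you verify directly what the paper delegates to Propositions 2.36 and 5.5 of \cite{TXLG}.
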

 \begin{proof}
%
Recall from \cite{moerdijk-haefliger} that for all $s \in \N$,  the inclusion $i: \G_\bullet \to B\G$ induces an isomorphism $i^*_s: H^s(B\G, \T) \to H^s(\G, \T)$, for all $s \in \N$. Moreover, since $i$ is continuous, it also induces a pullback homomorphism $p_1: H^1(B\G, PU(n)) \to H^1(\G_\bullet, PU(n))$, which need not be an isomorphism since $PU(n)$ is not abelian.

Write $v: H^1(B\G, PU(n)) \to H^2(B\G, \T)$ for the Serre map which associates to a principal $PU(n)$-bundle over $B\G$ its Dixmier-Douady class in $H^2(B\G, \T) \cong H^3(B\G,\Z)$. The Serre-Grothendieck Theorem (cf.~\cite{donovan-karoubi} Theorem 8, \cite{lupercio-uribe} Theorem 7.2.11, \cite{grothendieck} Theorem 1.6) establishes that 
\[v: H^1(B\G, PU(n)) \to H^3(B\G, \Z)\] is an isomorphism onto the $n$-torsion subgroup of $H^3(B\G, \Z)$ which is induced by the short exact sequence $\beta$ of Equation \eqref{beta}.

If $P$ is the principal $PU(n)$-bundle over $\G$ which is associated to $\W$ by Proposition \ref{prop:PUn-bdl}, examining the constructions employed in the proof of Proposition \ref{prop:PUn-bdl} reveals that 
\[P = p_1 \circ v\inv \circ (i^*_2)\inv (\W).\]
 
Recall from page 860 of \cite{TXLG} that we have a natural map 
\[ \Phi: H^1(\G_\bullet, PU(n)) \times H^2(PU(n), \mathcal{S}^1) \to H^2(\G, \mathcal{S}^1), \]
which arises from pulling back a principal $PU(n)$-bundle over $\G$ along a $\T$-central extension of $PU(n)$.
We claim that 
\begin{equation}
\Phi(P_\bullet, \beta) = [\W].\label{eq:phi-p-beta}
\end{equation}
Since $\Phi$ is natural, and taking pullbacks preserves cohomology classes, \eqref{eq:phi-p-beta} holds because $v$ is induced by $\beta$, and  $\beta$ generates
$H^2(PU(n), \mathcal{S}^1)$.

We will now use the hypothesis that $P$ admits a lift to a principal $U(n)$-bundle $\tilde{P} \to \G\z$ to show that   $\Phi(P_\bullet, \beta)$ is represented by a twist $\mathcal{T}$ which admits a twisted vector bundle. As explained in \cite{TXLG} pp. 860-1, this hypothesis 
allows us to construct an explicit representative $\mathcal{T}$ of $\Phi(P_\bullet, \beta)$  as follows.

By hypothesis, the quotient map $\pi: U(n) \to PU(n)$ induces a bundle morphism $\tilde{\pi}: \tilde{P} \to P$.  Write $\frac{P \times P}{PU(n)}$ for the gauge groupoid of the bundle $P$, and notice that, if $\rho: P \to \G\z$ is the projection map of the principal bundle $P$, we can define a morphism $\varphi: \G \to \frac{P \times P}{PU(n)}$ as follows.  Given $g \in \G$, choose $p \in P$ with $\rho(p) = s(g)$, and define 
\[\varphi(g) = [g \cdot p, p] .\]
The fact that $P$ is a principal $PU(n)$-bundle implies that $\varphi(g)$ is a well defined groupoid homomorphism.

We define the twist $\mathcal{T} $ over $\G$ by 
\[\mathcal{T} = \{([q_1, q_2], g) \in \frac{\tilde{P} \times \tilde{P}}{U(n)} \times \G : [\tilde{\pi}(q_1), \tilde{\pi}(q_2)] = \varphi(g)\}.\] 
We observe that 
\[([q_1, q_2], g) \in \mathcal{T} \Leftrightarrow g \cdot \tilde{\pi}(q_2) = \tilde{\pi}(q_1).\]
The backward implication is evident; for the forward implication, note that 
\begin{align*} ([q_1, q_2], g) \in \mathcal{T} & \Rightarrow \tilde{\pi}(q_2) \in P_{s(g)}  \Rightarrow \varphi(g) = [g \cdot \tilde{\pi}(q_2), \tilde{\pi}(q_2)].
\end{align*}
But also, $([q_1, q_2], g) \in \mathcal{T} \Rightarrow \varphi(g) = [\tilde{\pi}(q_1), \tilde{\pi}(q_2)].$  Note that 
\[[\tilde{\pi}(q_1), \tilde{\pi}(q_2)] = [p^1, p^2] \Leftrightarrow \ \exists \ u \in PU(n) \text{ s.t. } \tilde{\pi}(q_i) = p^i \cdot u \ \forall \ i;\]
consequently, $g \cdot \tilde{\pi}(q_2) = \tilde{\pi}(q_1)$ as claimed.

The groupoid structure on $\mathcal{T}$ is given by 
\[s([q_1, q_2], g) = s(g), \quad r([q_1, q_2], g) = r(g);\]
if $s(g) = r(h)$ then we define the multiplication by 
\[([q_1, q_2], g)([p_1, p_2], h) = ([q_1 \cdot u, p_2], gh),\]
 where $u \in U(n)$ is the unique element such that $q_2 \cdot u = p_1 \in \tilde{P}$.  

Proposition 2.36 of \cite{TXLG} establishes that $\mathcal{T}$ is a twist over $\G$ such that $[\mathcal{T}] = \Phi(P_\bullet, \beta)$. 
The action of $\T$ on $\mathcal{T}$ is given by 
\begin{equation}
\label{eq:S1-action-T}
z \cdot ([q_1, q_2], g) = ([q_1 \cdot z, q_2], g).\end{equation}

By construction, $\mathcal{T}$ admits a generalized homomorphism $ \mathcal{T} \to U(n)$ which is $\T$-equivariant.  To be precise, the bundle $\tilde{P}$ admits a left action of $\mathcal{T}$: if $\tilde{p} \in \tilde{P}$ lies in the fiber over $s(g)$, and $([q_1, q_2], g) \in \mathcal{T}$, there exists a unique $u \in U(n)$ such that $q_2 \cdot u = \tilde{p}$.  Thus, we define
\[([q_1, q_2], g) \cdot \tilde{p} = q_1 \cdot u .\]
One checks immediately that this action is continuous, $\T$-equivariant, and commutes with the right action of $U(n)$ on $\tilde{P}$.  In other words, the bundle $\tilde{P}$ equipped with this action constitutes a $\T$-equivariant generalized morphism $\mathcal{T} \to U(n)$. 
  Thus, Proposition 5.5 of \cite{TXLG} explains how to construct a $(\G, \mathcal{T})$-twisted vector bundle. 
  Since $[\mathcal{T}] = \Phi(P_\bullet, \beta) = [\W]$, this completes the proof.
\end{proof}
\subsection{Examples}
\label{sec:examples}
In this section, we present some examples establishing that the hypotheses of Theorem \ref{main-thm} are satisfied in many cases of interest.

\begin{example}
\label{ex:BG-cpt}
Let $M$ be a compact CW complex, and let $\alpha$ be a homeomorphism of $M$.  If we set $\G = M \rtimes_\alpha \Z$, the first paragraph of \cite{willerton} Section 1.4.3 tells us that $B\G = M \times_\Z \R$.  Since $M$ is compact, so is $B\G$.
\end{example}

\begin{example} (cf.~\cite{salem-appendix} p.~273)
Let $\mathcal{F}$ be a foliation of a manifold $M$.
The holonomy groupoid $\mathcal{H}_\mathcal{F}$ of  $\mathcal{F}$ is an \'etale groupoid; moreover, if the leaves of the foliation all have contractible holonomy coverings, $B\mathcal{H}_{\mathcal{F}} = M$.  Examples of such foliations include the Reeb foliation of $S^3$ and the Kronecker foliation of $\T^n$. 

In particular, if $M$ is compact, any foliation $\mathcal{F}$ of $M$ with contractible leaves has an associated holonomy groupoid $\mathcal{H}_\mathcal{F}$ with $B\mathcal{H}_{\mathcal{F}}$ compact.
\end{example}

\begin{example}
Let $M := \R P^2 \times S^4$.  We will identify $\R P^2$ with $D^1/\sim$, where (in polar coordinates) $D^1 = \{(\rho, \theta) \in \R^2: 0 \leq \theta < 2\pi, 0 \leq \rho \leq 1\}$ and $(1, \theta) \sim (1, \theta + \pi)$.

Fix $x \in \R \backslash \Q$, and consider the homeomorphism $\alpha$ of $\R P^2 \times S^4$ given by 
\[\alpha([\rho, \theta], z) = ([\rho, \theta + (1-\rho) x], z).\]  Let $\G = M \rtimes_\alpha \Z$.  Since $M$ is compact, Example \ref{ex:BG-cpt} tells us that $B\G$ is compact as well.

By the K\"unneth Theorem, $\Z/2\Z \cong H^2(\R P^2, \Z) \otimes H^0(S^4, \Z)$ is a subgroup of $H^2 (M, \Z) \cong H^1(M, \T)$.  The groupoid $\G$ is an example of a Renault-Deaconu groupoid (cf.~\cite{deaconu-RD, deaconu-kumjian-muhly, ionescu-muhly}); thus, by Theorem 2.2 of \cite{deaconu-kumjian-muhly}, twists over $\G = M \times_\alpha \Z$ are classified by $H^1(M, \T)$.  It follows that $\G$ admits nontrivial torsion twists.

The short exact sequence $1 \to \T \to U(n) \to PU(n) \to 1$ tells us that the obstruction to a principal $PU(n)$-bundle over $M$ (an element of $H^1(M, PU(n))$) lifting to a principal $U(n)$-bundle over $M$ lies in $H^2(M, \T) \cong H^3(M, \Z) $.  However, the K\"unneth Theorem tells us that 
\[H^3(M, \Z) \cong H^3(\R P^2, \Z) \otimes H^0(S^4) \cong H^3(\R P^2, \Z) = 0.\]
 In other words, every principal $PU(n)$-bundle over $M$ lifts to a principal $U(n)$-bundle over $M$, so every torsion twist over $\G = M \times_\alpha \Z$ satisfies the hypotheses of Theorem \ref{main-thm}.
 
 Furthermore, since the action of $\Z$ on $M$ is not proper, this Example lies outside the cases (cf.~\cite{dwyer, TXLG}) where it was previously known that torsion twists admit twisted vector bundles.
\end{example}
\bibliographystyle{amsplain}
\bibliography{eagbib}
\end{document}